\numberwithin{equation}{section}
\let\Re=\undefined\DeclareMathOperator*{\Re}{Re}
\let\Im=\undefined\DeclareMathOperator*{\Im}{Im}
\newcommand{\R}{\mathbb{R}}
\newcommand{\C}{\mathbb{C}}
\newcommand{\eps}{\varepsilon}
\newcommand{\Z}{\mathbb{Z}}
\newtheorem{theorem}{Theorem}[section]
\newtheorem{lemma}[theorem]{Lemma}
\theoremstyle{definition}
\theoremstyle{remark}
\newcommand{\qtq}[1]{\quad\text{#1}\quad}
\begin{document}
\title[Focusing NLS]{A new proof of scattering below the ground state for the non-radial focusing NLS}
\author[B. Dodson]{Benjamin Dodson}
\address{Department of Mathematics, Johns Hopkins University}
\email{bdodson4@jhu.edu}
\author[J. Murphy]{Jason Murphy}
\address{Department of Mathematics and Statistics, Missouri University of Science and Technology}
\email{jason.murphy@mst.edu}

\begin{abstract} We revisit the scattering result of Duyckaerts, Holmer, and Roudenko for the non-radial $\dot H^{1/2}$-critical focusing NLS.  By proving an interaction Morawetz inequality, we give a simple proof of scattering below the ground state in dimensions $d\geq 3$ that avoids the use of concentration compactness. 
\end{abstract}

\maketitle

\section{Introduction} 

We consider the initial-value problem for the focusing $\dot H^{\frac12}$-critical nonlinear Schr\"odinger equation (NLS) in dimensions $d\geq 3:$
\begin{equation}\label{nls}
\begin{cases}
(i\partial_t+\Delta) u = -|u|^{\frac{4}{d-1}}u \\
u(0)=u_0\in H^1(\R^d),
\end{cases}
\end{equation}
where $u:\R\times\R^d\to\C$.  This includes the $3d$ cubic NLS, which we studied in our previous work \cite{DM} in the radial setting.  In this work, we extend our arguments to address the non-radial case. 

Solutions to \eqref{nls} conserve \emph{mass} and \emph{energy}, defined respectively by
\begin{align*}
M(u(t)) &= \int_{\R^d} |u(t,x)|^2\,dx, \\
E(u(t)) &= \int_{\R^d} \tfrac12 |\nabla u(t,x)|^2 - \tfrac{d-1}{2(d+1)}|u(t,x)|^\frac{2(d+1)}{d-1}\,dx.
\end{align*}
The equation \eqref{nls} is $\dot H^{1/2}$-critical in the sense that the $\dot H^{1/2}$-norm of the initial data is invariant under the scaling that preserves the class of solutions, namely,
\begin{equation}\label{scale}
u(t,x)\mapsto \lambda^{\frac{d-1}{2}} u(\lambda^2 t,\lambda x).
\end{equation}

By solution, we mean a function $u\in C_t H_x^1(I\times\R^d)$ on an interval $I\ni 0$ satisfying the Duhamel formula
\[
u(t)=e^{it\Delta}u_0+i\int_0^t e^{i(t-s)\Delta}(|u|^{\frac{4}{d-1}} u)(s)\,ds
\]
for $t\in I$, where $e^{it\Delta}$ is the Schr\"odinger group.  We write $I_{\max}$ for the maximal-lifespan of $u$ and call $u$ global if $I_{\max}=\R$.  A global solution $u$ \emph{scatters} if there exist $u_\pm\in H^1(\R^d)$ so that
\[
\lim_{t\to\pm\infty}\|u(t)-e^{it\Delta} u_\pm\|_{H^1(\R^d)}=0. 
\]

The equation \eqref{nls} admits a global but nonscattering solution
\[
u(t,x) = e^{it}Q(x), 
\]
where $Q$ is the ground state, namely, the unique positive decaying solution to the elliptic equation
\begin{equation}\label{qeq}
-\Delta Q + Q - |Q|^{\frac{4}{d-1}}Q=0. 
\end{equation}

Duyckaerts, Holmer, and Roudenko \cite{DHR} proved the following scattering result below the ground state threshold in dimension $d=3$.  An analogous result was proven in higher dimensions and for other intercritical nonlinearities in \cite{CFX, Guevara}.

\begin{theorem}\label{T} Suppose $u_0\in H^1(\R^d)$ satisfies
\begin{equation}\label{sub}
M(u_0)E(u_0)< M(Q)E(Q)\qtq{and} \|u_0\|_{L^2}\|u_0\|_{\dot H^1} < \|Q\|_{L^2} \|Q\|_{\dot H^1}.
\end{equation}
Then the solution to \eqref{nls} is global and scatters. 
\end{theorem}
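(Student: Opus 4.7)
The plan is to combine a variational coercivity analysis with an interaction Morawetz inequality adapted to the $\dot H^{1/2}$-critical scaling, bypassing concentration compactness. First, following the Kenig--Merle and Duyckaerts--Holmer--Roudenko approach, I would use the sharp Gagliardo--Nirenberg inequality
\[
\|f\|_{L^{\frac{2(d+1)}{d-1}}}^{\frac{2(d+1)}{d-1}} \leq C_{GN}\,\|f\|_{L^2}^{\frac{2}{d-1}}\,\|\nabla f\|_{L^2}^{\frac{2d}{d-1}},
\]
together with the Pohozaev identities for $Q$, to show that the subthreshold condition \eqref{sub} is preserved along the flow. This gives at once $I_{\max}=\R$, a uniform bound $\sup_t \|u(t)\|_{H^1}\lesssim_{u_0} 1$, and a quantitative coercivity
\[
\|\nabla u(t)\|_{L^2}^2 - \tfrac{d-1}{d+1}\|u(t)\|_{L^{\frac{2(d+1)}{d-1}}}^{\frac{2(d+1)}{d-1}} \geq \delta\,\|\nabla u(t)\|_{L^2}^2
\]
for some $\delta>0$ depending only on the gap in \eqref{sub}.

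The core step is an interaction Morawetz identity. With a suitable radial weight $a:\R^d\to\R$ (for example $a(x)=|x|$ or a smoothed variant adapted to the dimension), I would introduce
\[
\M(t) = 2\int_{\R^d}\!\int_{\R^d} |u(t,y)|^2\,\Im\bigl(\bar u\,\nabla u\bigr)(t,x)\cdot\nabla a(x-y)\,dx\,dy,
\]
compute $\tfrac{d}{dt}\M$ using \eqref{nls}, and obtain a positive kinetic/stress contribution together with a nonlinear term carrying the focusing (wrong) sign. The coercivity from the previous step, applied pointwise in $t$, is used to absorb the bad nonlinear contribution into the good kinetic contribution, leaving a net nonnegative integrand. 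Since $|\M(t)|$ is bounded by conserved $L^2$ and $\dot H^{1/2}$ quantities, integrating in time yields a global spacetime bound
\[
\|u\|_{L_{t,x}^{\frac{2(d+1)}{d-1}}(\R\times\R^d)} \lesssim 1
\]
at the $\dot H^{1/2}$-critical Strichartz scaling.

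With this finite critical spacetime norm, standard Strichartz and stability arguments, together with persistence of $H^1$-regularity, upgrade to an $L_t^\infty H^1_x$ bound on all of $\R$, and a routine Duhamel/limiting argument produces asymptotic states $u_\pm \in H^1$.

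The main obstacle is the focusing interaction Morawetz in the middle step. In the defocusing case every term in $\tfrac{d}{dt}\M$ has a favorable sign; here the nonlinearity enters with the wrong sign, and the strategy depends on extracting a sharp enough coercivity in Step~1 to dominate it uniformly in time. A further subtlety is keeping the estimate translation-invariant in $x$, so that the non-radial nature of the problem does not force the reintroduction of a spatial center and hence a profile decomposition---precisely what the interaction Morawetz is designed to avoid.
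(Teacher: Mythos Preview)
Your strategy has a genuine gap at the Morawetz step. The coercivity inequality you derive is a statement about \emph{global} $L^2$ and $\dot H^1$ norms, but the integrand of $\tfrac{d}{dt}\M$ does not have this structure. For any weight $a$, the identity produces, besides the kinetic term $\int\!\!\int |u(y)|^2 \Re(\bar u_j u_k)(x)\,a_{jk}(x-y)$ and the focusing potential term, a momentum cross term
\[
-\int\!\!\int \Im(\bar u u_j)(y)\,\Im(\bar u u_k)(x)\,a_{jk}(x-y)\,dx\,dy.
\]
In the defocusing case one adds the kinetic and momentum pieces and uses positivity of the stress tensor; but that combination is strictly smaller than $\|u\|_{L^2}^2\|\nabla u\|_{L^2}^2$ and cannot dominate the potential term via Gagliardo--Nirenberg. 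With $a(x)=|x|$ the situation is worse still: the Hessian $a_{jk}$ only sees angular derivatives, so there is no full-gradient term against which to play the sharp constant.

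The paper resolves this by localizing the weight and, crucially, performing a \emph{Galilean boost} $u\mapsto e^{ix\xi}u$ on each ball, with $\xi=\xi(t,s,R)$ chosen so that the local momentum of $\chi_R(\cdot-s)u$ vanishes. This kills the cross term, and the kinetic contribution becomes exactly $\int|\chi_R u|^2\int|\chi_R\nabla u^\xi|^2$; one then applies a boost-invariant refinement of Gagliardo--Nirenberg (Lemma~\ref{L:GN}) to the localized function, after checking that localization preserves the subthreshold condition up to $O(R^{-2})$ errors. The output (Theorem~\ref{T:IM}) is not a global $L_{t,x}^q$ bound as you claim, but a statement that every long time window contains a time and scale where the localized kinetic density is small; this is then converted into the scattering criterion of Theorem~\ref{T:SC} via Strichartz and local smoothing in Section~\ref{S5}. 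The Galilean step is the non-radial substitute for the zero-momentum reduction of minimal blowup solutions in the concentration-compactness proof, and without it the focusing interaction Morawetz does not close.
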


The proof of Theorem~\ref{T} was based on the concentration compactness approach to induction on energy.  We present a simplified proof of Theorem~\ref{T} that avoids concentration compactness. In particular, we establish an interaction Morawetz inequality for solutions to \eqref{nls} obeying \eqref{sub} (see Theorem~\ref{T:IM}).  Combining this with a scattering criterion established in Theorem~\ref{T:SC} then suffices to establish Theorem~\ref{T}.

Theorem~\ref{T} was originally proven in dimension $d=3$ in the radial setting \cite{HR} by Holmer and Roudenko, also through the use of concentration compactness.  In our previous work \cite{DM} we presented a new proof in the radial setting that avoided concentration compactness.  The key was to exploit the radial Sobolev embedding in order to establish a virial/Morawetz hybrid, which (together with a scattering criterion due to Tao \cite{Tao}) was sufficient to prove scattering. 

The extension of Theorem~\ref{T} from the radial to the non-radial setting in \cite{DHR} relied on the Galilean invariance of \eqref{nls}.  In particular, using this symmetry one shows that minimal blowup solutions must have zero momentum, which ultimately ameliorates the lack of compactness due to spatial translation.  In our setting, the key ingredient is an interaction Morawetz inequality for solutions obeying \eqref{sub}, the proof of which also relies on a Galilean transformation.  This estimate is similar to those established in \cite{Dod1, Dod2} for the mass- and energy-critical problems.  In fact, the estimate here is greatly simplified by the fact that the solution belongs to $H^1$.

The scattering in Theorem~\ref{T} is a consequence of the fact that the solution obeys global critical space-time bounds.  In particular, one can verify that our arguments ultimately yield an estimate of the form
\[
\|u\|_{L_{t,x}^{\frac{2(d+2)}{d-1}}(\R\times\R^d)} \lesssim \exp\{r(E(u_0),M(u_0))\}, 
\]
where $r$ is a rational polynomial of $E(u_0)$, $M(u_0)$, $d$, $M(Q)$, and $E(Q)$ (see for example \cite{Tao2}).

The rest of this paper is organized as follows: In Section~\ref{S2}, we set up notation, review some linear theory, and review the variational analysis related to the ground state.  In Section~\ref{S3}, we establish a scattering criterion for \eqref{nls}, Theorem~\ref{T:SC}.  In Section~\ref{S4}, we prove an interaction Morawetz inequality for solutions obeying \eqref{sub}.  Finally, in Section~\ref{S5}, we use the interaction Morawetz inquality to show that solutions obeying \eqref{sub} satisfy the scattering criterion of Theorem~\ref{T:SC}, thereby completing the proof of Theorem~\ref{T}.

\subsection*{Acknowledgements} B. D. was supported by NSF DMS-1500424.

\section{Preliminaries}\label{S2}
We write $A\lesssim B$ to denote $A\leq CB$ for some $C>0$. We use the standard Lebesgue norms
\[
\|f\|_{L_x^r(\R^d)} = \biggl(\int_{\R^d} |f(x)|^r\,dx\biggr)^{\frac1r},\quad \|f\|_{L_t^q L_x^r(I\times\R^d)}=\bigl\| \,\|f(t)\|_{L_x^r(\R^d)}\bigr\|_{L_t^q(I)}, 
\]
with the usual adjustments if $q$ or $r$ is $\infty$. We write $a'\in[1,\infty]$ for the H\"older dual of $a\in[1,\infty]$, i.e. the solution to $\tfrac{1}{a}+\tfrac{1}{a'}=1$. 

\subsection{Local theory and linear estimates} The local theory for \eqref{nls} is standard (see \cite{Caz} for a textbook treatment).  In particular, for any $u_0\in H^1$ there exists a maximal-lifespan solution to \eqref{nls} in $C_t H_x^1$ that conserves mass and energy.  Furthermore, any solution that remains bounded in $H^1$ extends to a global solution. 

The Schr\"odinger group obeys the following dispersive estimates: 
\begin{equation}\label{dispersive}
\|e^{it\Delta}\|_{L_x^{r'}(\R^d)\to L_x^r(\R^d)} \lesssim |t|^{-(\frac{d}{2}-\frac{d}{r})},\quad 2\leq r\leq\infty.
\end{equation}
These estimates imply the standard Strichartz estimates (cf. \cite{GinVel, KeeTao, Str}), which in dimensions $d\geq 3$ take the following form: for any $2\leq q,\tilde q,r,\tilde r\leq\infty$ satisfying
\[
\tfrac{2}{q}+\tfrac{d}{r}=\tfrac{2}{\tilde q}+\tfrac{d}{\tilde r} = \tfrac{d}{2} 
\]
and any $I\subset\R$, we have
\begin{align*}
\| e^{it\Delta}f\|_{L_t^q L_x^r(I\times\R^d)} &\lesssim \|f\|_{L_x^2(\R^d)}, \\
\biggl\| \int_0^t e^{i(t-s)\Delta}F(s)\,ds\biggr\|_{L_t^q L_x^r(I\times\R^d)} &\lesssim \|F\|_{L_t^{\tilde q'}L_x^{\tilde r'}(I\times\R^d)}.
\end{align*}

We also have the standard local smoothing estimates (see e.g. \cite{CS}):
\[
\| \chi_R |\nabla|^{\frac12} e^{it\Delta} \phi \|_{L_{t,x}^2(I\times\R^d)} \lesssim R^{\frac12}\|\phi\|_{L_x^2(\R^d)},
\]
where $I\subset\R$ and $\chi_R$ is a cutoff to a ball of radius $R$.  We will need the following form of local smoothing:
\begin{equation}\label{smoothing2}
\biggl\| \int_0^t e^{i(t-s)\Delta}\chi_R F(s)\,ds\biggr\|_{L_{t,x}^{\frac{2(d+2)}{d-1}}(I\times\R^d)} \lesssim R^{\frac12}\|F\|_{L_{t,x}^2(I\times\R^d)}.
\end{equation}
To deduce this bound, one can first write the dual estimate 
\[
\biggl\| \int_\R |\nabla|^{\frac12}e^{-is\Delta} \chi_R F(s)\,ds\biggr\|_{L_x^2(\R^d)} \lesssim R^{\frac12}\|F\|_{L_{t,x}^2(I\times\R^d)}
\]
and then use Sobolev embedding and Strichartz to deduce 
\[
\biggl\| \int_\R e^{i(t-s)\Delta} \chi_R F(s)\,ds \biggr\|_{L_{t,x}^{\frac{2(d+2)}{d-1}}(I\times\R^d)} \lesssim R^{\frac12} \|F\|_{L_{t,x}^2(I\times\R^d)}.
\]
Then \eqref{smoothing2} follows from the Christ--Kiselev lemma \cite{CK}.

\subsection{Variational analysis}
We briefly review the variational analysis related to the ground state (for more details, see e.g. \cite{Weinstein}). The ground state $Q$ is a solution to \eqref{qeq} and an optimizer of the Gagliardo--Nirenberg inequality:
\begin{equation}\label{E:GN}
\|f\|_{L^{\frac{2(d+1)}{d-1}}}^{\frac{2(d+1)}{d-1}} \leq C_0 \|f\|_{L^2}^{\frac{2}{d-1}} \|\nabla f\|_{L^2}^{\frac{2d}{d-1}},
\end{equation}
where $C_0$ denotes the sharp constant.  Taking the inner product of \eqref{qeq} with $Q$ and $x\cdot\nabla Q$ and integrating by parts yields the following Pohozaev identites:
 \begin{equation}\label{poho}
\begin{aligned}
\|\nabla Q\|_{L^2}^2 + \|Q\|_{L^2}^2 &= \|Q\|_{L^{\frac{2(d+1)}{d-1}}}^{\frac{2(d+1)}{d-1}}, \\
(d-2)\|\nabla Q\|_{L^2}^2 + d\|Q\|_{L^2}^2& = \tfrac{d(d-1)}{d+1}\|Q\|_{L^{\frac{2(d+1)}{d-1}}}^{\frac{2(d+1)}{d-1}}. 
\end{aligned}
\end{equation}
These imply
\[
\|\nabla Q\|_{L^2}^2 = d\|Q\|_{L^2}^2\qtq{and}\|Q\|_{L^{\frac{2(d+1)}{d-1}}}^{\frac{2(d+1)}{d-1}} = (d+1)\|Q||_{L^2}^2,
\]
which allow us to rewrite the sharp constant in \eqref{E:GN} as
\[
C_0 = \tfrac{d+1}{d}\bigl[\|Q\|_{L^2}\|\nabla Q\|_{L^2}]^{-\frac{2}{d-1}}.
\]

In fact, we will need the following slight refinement of \eqref{E:GN}:
\begin{lemma}\label{L:GN} For any $f\in H^1(\R^d)$ and $\xi\in\R^d$, 
\[
\|f\|_{L^{\frac{2(d+1)}{d-1}}}^{\frac{2(d+1)}{d-1}} \leq \frac{d+1}{d}\biggl[ \frac{\|f\|_{L^2}\|\nabla f\|_{L^2}}{\|Q\|_{L^2}\|\nabla Q\|_{L^2}}\biggr]^{\frac{2}{d-1}} \|\nabla[e^{ix\xi}f]\|_{L^2}^2. 
\]
\end{lemma}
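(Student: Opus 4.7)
The plan is to reduce Lemma~\ref{L:GN} to the sharp Gagliardo--Nirenberg inequality \eqref{E:GN} applied to two different functions (namely $f$ and $e^{ix\xi}f$) via a simple case analysis. The key observation is that the Galilean factor $e^{ix\xi}$ preserves pointwise moduli, so $\|e^{ix\xi}f\|_{L^p}=\|f\|_{L^p}$ for every $p$; in particular, the left-hand side and the $L^2$ factor on the right-hand side are unchanged if we replace $f$ by $e^{ix\xi}f$.

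Using the explicit value $C_0=\tfrac{d+1}{d}[\|Q\|_{L^2}\|\nabla Q\|_{L^2}]^{-\frac{2}{d-1}}$, the target inequality can be rewritten as
\[
\|f\|_{L^{\frac{2(d+1)}{d-1}}}^{\frac{2(d+1)}{d-1}} \leq C_0\,\|f\|_{L^2}^{\frac{2}{d-1}} \|\nabla f\|_{L^2}^{\frac{2}{d-1}} \|\nabla[e^{ix\xi}f]\|_{L^2}^{2}.
\]
I will split into two cases according to the ordering of $\|\nabla f\|_{L^2}$ and $\|\nabla[e^{ix\xi}f]\|_{L^2}$. If $\|\nabla f\|_{L^2}\leq \|\nabla[e^{ix\xi}f]\|_{L^2}$, then applying \eqref{E:GN} to $f$ itself and writing the exponent $\tfrac{2d}{d-1}=\tfrac{2}{d-1}+2$ gives
\[
\|f\|_{L^{\frac{2(d+1)}{d-1}}}^{\frac{2(d+1)}{d-1}} \leq C_0\,\|f\|_{L^2}^{\frac{2}{d-1}} \|\nabla f\|_{L^2}^{\frac{2}{d-1}} \|\nabla f\|_{L^2}^{2} \leq C_0\,\|f\|_{L^2}^{\frac{2}{d-1}} \|\nabla f\|_{L^2}^{\frac{2}{d-1}} \|\nabla[e^{ix\xi}f]\|_{L^2}^{2},
\]
which is what we want. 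In the opposite case $\|\nabla[e^{ix\xi}f]\|_{L^2}\leq \|\nabla f\|_{L^2}$, I will instead apply \eqref{E:GN} to the Galilean transform $g:=e^{ix\xi}f$; using $\|g\|_{L^p}=\|f\|_{L^p}$ and splitting $\|\nabla g\|_{L^2}^{\frac{2d}{d-1}}=\|\nabla g\|_{L^2}^{\frac{2}{d-1}} \|\nabla g\|_{L^2}^{2}$, and then bounding the single factor $\|\nabla[e^{ix\xi}f]\|_{L^2}^{\frac{2}{d-1}}$ by $\|\nabla f\|_{L^2}^{\frac{2}{d-1}}$, yields the same conclusion.

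I do not expect a serious obstacle: the only thing to watch is that the case split is clean (the two cases cover all $f,\xi$ and the right inequality is chosen in each), and that \eqref{E:GN} is applied to the correct function in each case. No computation involving $\nabla[e^{ix\xi}f]=e^{ix\xi}(i\xi f+\nabla f)$ or the momentum is actually needed for the proof; that structure only enters implicitly, through the fact that $\|\nabla[e^{ix\xi}f]\|_{L^2}$ can be either larger or smaller than $\|\nabla f\|_{L^2}$.
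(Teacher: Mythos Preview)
Your proof is correct and uses the same core idea as the paper: apply the sharp Gagliardo--Nirenberg inequality to the Galilean transform $e^{ix\xi}f$, exploiting that $L^p$-norms are preserved while the $\dot H^1$-norm changes. The only difference is packaging: the paper applies \eqref{E:GN} to $e^{ix\xi'}f$ for all $\xi'$, takes the infimum, and then splits $\inf_\xi\bigl(\|\nabla[e^{ix\xi}f]\|_{L^2}^{\frac{2}{d-1}}\|\nabla[e^{ix\xi}f]\|_{L^2}^{2}\bigr)$ as a product of infima (legitimate here since both factors are positive powers of the same quantity), bounding one infimum by its value at $\xi'=0$ and the other by its value at the given $\xi$. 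Your case split on whether $\|\nabla f\|_{L^2}\lessgtr\|\nabla[e^{ix\xi}f]\|_{L^2}$ accomplishes exactly the same thing---you are simply choosing, explicitly, which of $f$ or $e^{ix\xi}f$ has the smaller gradient and applying \eqref{E:GN} to that one. Your version is arguably more transparent, since the infimum-splitting step in the paper's argument looks suspicious at first glance (one needs to notice the two factors are monotone in the same quantity); on the other hand, the paper's formulation makes clear that the inequality is really a statement about $\inf_\xi\|\nabla[e^{ix\xi}f]\|_{L^2}$, which is the momentum-free gradient.
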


\begin{proof} As the $L^{\frac{2(d+1)}{d-1}}$-norm and $L^2$ norm are invariant under $f\mapsto e^{ix\xi}f$,  we deduce
\begin{align*}
\|f\|_{L^{\frac{2(d+1)}{d-1}}}^{\frac{2(d+1)}{d-1}}& \leq \tfrac{d+1}{d} \inf_{\xi\in\R^d}\bigl(\bigl[\tfrac{\|f\|_{L^2}\|\nabla[e^{ix\xi} f]\|_{L^2}}{\|Q\|_{L^2}\|\nabla Q\|_{L^2}}\bigr]^{\frac{2}{d-1}} \|\nabla[e^{ix\xi}f]\|_{L^2}^2\bigr) \\
& \leq \tfrac{d+1}{d}\inf_{\xi\in\R^d}\bigl[\tfrac{\|f\|_{L^2}\|\nabla[e^{ix\xi} f]\|_{L^2}}{\|Q\|_{L^2}\|\nabla Q\|_{L^2}}\bigr]^{\frac{2}{d-1}}\cdot\inf_{\xi\in\R^d}\|\nabla[e^{ix\xi}f]\|_{L^2}^2,
\end{align*}
which implies the desired result. 
\end{proof}

We next record a lemma that is very similar to \cite[Lemma~2.3]{DM} (and hence we omit the proof). The key ingredients are the sharp Gagliardo--Nirenberg inequality, Pohozaev identities, and conservation of mass and energy. 

\begin{lemma}[Coercivity]\label{L:C1} Suppose $u_0\in H^1$ and that for some $\delta\in(0,1)$ we have
\[
M(u_0)E(u_0) < (1-\delta)M(Q)E(Q)\qtq{and} \|u_0\|_{L^2}\|u_0\|_{\dot H^1} \leq \|Q\|_{L^2} \|Q\|_{\dot H^1}.
\]
Let $u:I\times\R^d\to\C$ be the maximal-lifespan solution to \eqref{nls} with $u(0)=u_0$.  Then there exists $\delta'=\delta'(\delta)>0$ so that
\[
\|u(t)\|_{L^2} \|u(t)\|_{\dot H^1} <(1-\delta')\|Q\|_{L^2} \|Q\|_{\dot H^1}\qtq{for}t\in I.
\]
In particular, $I=\R$ and $u$ remains uniformly bounded in $H^1$.  In fact,
\begin{equation}\label{h1bd}
\sup_{t\in\R} \|u(t)\|_{\dot H^1}^2 \lesssim E(u_0). 
\end{equation}
\end{lemma}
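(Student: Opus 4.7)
The plan is to run the standard mass-energy / sharp Gagliardo--Nirenberg comparison, phrased in terms of the scale-invariant quantity $y(t):=\|u(t)\|_{L^2}\|u(t)\|_{\dot H^1}$ and $y_Q:=\|Q\|_{L^2}\|Q\|_{\dot H^1}$. First I would multiply the conservation of energy by the (conserved) mass to write
\[
M(u)E(u) = \tfrac12\|u\|_{L^2}^2\|\nabla u\|_{L^2}^2 - \tfrac{d-1}{2(d+1)}\|u\|_{L^2}^2\|u\|_{L^{\frac{2(d+1)}{d-1}}}^{\frac{2(d+1)}{d-1}},
\]
and then bound the nonlinear term using \eqref{E:GN} together with the explicit value of $C_0$ recorded above. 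This gives $M(u)E(u)\ge f(y(t))$ for all $t\in I$, where
\[
f(y) := \tfrac12 y^2 - \tfrac{d-1}{2(d+1)}\,C_0\, y^{\frac{2d}{d-1}}.
\]

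Next I would analyze $f$. A direct computation using $C_0=\tfrac{d+1}{d}\,y_Q^{-\frac{2}{d-1}}$ shows that $f$ is strictly increasing on $[0,y_Q]$, strictly decreasing on $[y_Q,\infty)$, and attains its unique maximum at $y=y_Q$ with $f(y_Q)=\tfrac{1}{2d}y_Q^2$. Using the Pohozaev identities (which give $\|\nabla Q\|_{L^2}^2=d\|Q\|_{L^2}^2$ and hence $E(Q)=\tfrac12\|Q\|_{L^2}^2$), one checks that $M(Q)E(Q)=\tfrac{1}{2d}y_Q^2$, so in fact $f(y_Q)=M(Q)E(Q)$.

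Now I would invoke the hypotheses together with conservation of mass and energy. Since $M(u(t))E(u(t))=M(u_0)E(u_0)\le (1-\delta)M(Q)E(Q)=(1-\delta)f(y_Q)$, we conclude $f(y(t))\le (1-\delta)f(y_Q)$ for every $t\in I$. The sublevel set $\{y\ge 0: f(y)\le (1-\delta)f(y_Q)\}$ consists of two disjoint intervals, $[0,y_-]\cup[y_+,\infty)$ with $y_-<y_Q<y_+$, and a short calculation (or a compactness argument on $[0,y_Q]$) produces a $\delta'=\delta'(\delta)>0$ with $y_-\le (1-\delta')y_Q$. The hypothesis $y(0)\le y_Q$ places $y(0)$ in the left component; continuity of $t\mapsto y(t)$, which follows from $u\in C_tH_x^1$, forbids $y(t)$ from jumping across the forbidden band, so $y(t)\le (1-\delta')y_Q$ throughout $I$.

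Finally I would upgrade this to the global $H^1$ bound: from $y(t)\le (1-\delta')y_Q$ and conservation of mass one gets $\|u(t)\|_{\dot H^1}\lesssim 1$ uniformly on $I$, and then the standard blowup criterion (any solution bounded in $H^1$ extends globally) gives $I=\R$ and \eqref{h1bd}. The only step with any subtlety is the continuity/topological argument separating the two components of the sublevel set, but this is routine once $f$ has been characterized; there is no real obstacle.
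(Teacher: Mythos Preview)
Your argument is correct and is precisely the standard approach the paper has in mind: the authors omit the proof but explicitly cite the sharp Gagliardo--Nirenberg inequality, the Pohozaev identities, and conservation of mass and energy as the key ingredients, which is exactly your comparison of $f(y(t))$ against $f(y_Q)=M(Q)E(Q)$ followed by a continuity trapping argument. The only point you leave slightly implicit is the quantitative bound \eqref{h1bd}: once $y(t)\le(1-\delta')y_Q$ is established, plugging Gagliardo--Nirenberg back into the energy gives $E(u_0)\ge \tfrac12\|\nabla u(t)\|_{L^2}^2\bigl[1-\tfrac{d-1}{d}(y(t)/y_Q)^{2/(d-1)}\bigr]\ge \tfrac{1}{2d}\|\nabla u(t)\|_{L^2}^2$, which yields the claimed estimate.
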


\section{A scattering criterion}\label{S3}

In this section we prove a scattering criterion for \eqref{nls}.  Roughly speaking, it states that if in any sufficiently large window of time we can find a large interval on which the scattering norm is small, then the solution must scatter.  The argument is perturbative and relies only on dispersive/Strichartz estimates. 

\begin{theorem}[Scattering criterion]\label{T:SC} Let $u:\R\times\R^d\to\C$ be a solution to \eqref{nls} satisfying
\[
M(u_0)=E(u_0)=E_0\qtq{and} \sup_{t\in\R} \|u(t)\|_{H^1}^2 \lesssim E_0. 
\]

Suppose that 
\begin{equation}\label{sc}
\forall a\in\R\quad\exists t_0\in(a,a+T_0)\qtq{such that}
\| u\|_{L_{t,x}^{\frac{2(d+2)}{d-1}}([t_0-T_0^{\frac13},t_0]\times\R^d)}\lesssim \eps,
\end{equation}
where $\eps=\eps(E_0)$ is sufficiently small and $T_0=T_0(\eps,E_0)$ is sufficiently large.  Then $u$ scatters forward in time.
\end{theorem}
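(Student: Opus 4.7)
My plan is a perturbative Duhamel argument using only dispersive and Strichartz estimates. Write $q=\tfrac{2(d+2)}{d-1}$ for the scattering exponent; note that this is $\dot H^{1/2}$-admissible. By a standard small-data Strichartz iteration in the $H^1$-bounded regime (using the $H^1$-bound built into the hypothesis of the theorem), there exists a threshold $\eta_0=\eta_0(E_0)>0$ such that whenever $\|e^{i(t-t_*)\Delta}u(t_*)\|_{L_{t,x}^q([t_*,\infty)\times\R^d)}<\eta_0$ at some time $t_*$, one obtains $\|u\|_{L_{t,x}^q([t_*,\infty)\times\R^d)}<\infty$, which yields forward scattering. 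So the task reduces to exhibiting one $t_*$ for which the linear evolution has small scattering norm on $[t_*,\infty)$.

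To produce such a $t_*$, I would apply \eqref{sc} (with $a$ chosen large enough) to extract a time $t_0$ satisfying $\|u\|_{L_{t,x}^q([t_0-T_0^{1/3},t_0]\times\R^d)}\leq\eps$, and then expand $u(t_0)$ by Duhamel from $t_0-T_0^{1/3}$, so that for $t>t_0$
\begin{equation*}
e^{i(t-t_0)\Delta}u(t_0)=e^{i(t-t_0+T_0^{1/3})\Delta}u(t_0-T_0^{1/3})+i\int_{t_0-T_0^{1/3}}^{t_0}e^{i(t-s)\Delta}(|u|^{\frac{4}{d-1}}u)(s)\,ds.
\end{equation*}
The Duhamel integral, taken in $L_{t,x}^q([t_0,\infty)\times\R^d)$, is controlled by inhomogeneous Strichartz with a suitable dual admissible pair, together with H\"older: factoring the nonlinearity as $|u|^{4/(d-1)}\cdot u$ extracts $\|u\|_{L_{t,x}^q([t_0-T_0^{1/3},t_0])}^{4/(d-1)}\leq\eps^{4/(d-1)}$ against a remaining norm bounded by the uniform $H^1$-control. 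This yields a contribution $\leq C(E_0)\eps^{4/(d-1)}$, which is $\leq\eta_0/2$ once $\eps=\eps(\eta_0,E_0)$ is chosen small.

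The main obstacle will be the free evolution term $e^{i(t-t_0+T_0^{1/3})\Delta}u(t_0-T_0^{1/3})$: the propagator carries a time argument of at least $T_0^{1/3}$ on $[t_0,\infty)$, yet no uniform rate of $L_{t,x}^q$-smallness as $T_0\to\infty$ is available for generic $H^1$-bounded data (the $H^1$-ball is not precompact in $L_{t,x}^q$). To extract quantitative decay I would decompose $u(t_0-T_0^{1/3})=\chi_R\phi+(1-\chi_R)\phi$ with $R$ an intermediate power of $T_0$: on the localized piece, the crude estimate $\|\chi_R\phi\|_{L^1}\lesssim R^{d/2}\|\phi\|_{L^2}$ combined with the dispersive bound \eqref{dispersive} and interpolation with the conserved $L^2$-norm gives a pointwise $L_x^q$-estimate of the shape $t^{-\beta}R^{\gamma}C(E_0)$, whose $q$-th power is integrable on $[T_0^{1/3},\infty)$ and contributes a factor $T_0^{-\alpha}$. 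For the exterior piece $(1-\chi_R)\phi$ I plan to iterate Duhamel once more, writing $\phi$ in terms of a linear propagation from a preceding small-norm window and a short Duhamel correction, and then close via Strichartz, using the $\eps$-smallness to absorb the nonlinear remainder and \eqref{dispersive} on the linear part. Balancing $R$ against $T_0$ and taking $T_0=T_0(\eta_0,E_0)$ large then produces a bound $\leq\eta_0/2$; handling this exterior piece is the delicate step and is where the non-radial setting creates difficulty relative to the radial argument in \cite{DM}, in which radial Sobolev embedding gave pointwise spatial decay for free.
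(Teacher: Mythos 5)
Your decomposition of the Duhamel integral over the short window $[t_0-T_0^{1/3},t_0]$, with H\"older extracting $\|u\|_{L_{t,x}^q}^{4/(d-1)}\leq\eps^{4/(d-1)}$ against a Strichartz-bounded factor, is exactly right and matches the first step of the paper's Lemma~\ref{L:SC}. You have also correctly identified the real obstacle: the term $e^{i(t-t_0+T_0^{1/3})\Delta}u(t_0-T_0^{1/3})$ carries a generic $H^1$-bounded profile, and no uniform $L_{t,x}^q$-decay rate as $T_0\to\infty$ is available for such data. Unfortunately the fix you propose does not close. The exterior piece $(1-\chi_R)u(t_0-T_0^{1/3})$ has no reason to be small in $L^2$ — for non-radial data the profile can sit entirely outside $|x|\leq R$ with full mass — and ``iterating Duhamel once more from a preceding small-norm window'' does not remove the difficulty: it reproduces a free evolution of yet another generic $H^1$ state, so you are chasing your tail. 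Your program of producing a \emph{single} time $t_*$ for which $\|e^{i(t-t_*)\Delta}u(t_*)\|_{L_{t,x}^q([t_*,\infty))}$ is small is, in fact, too ambitious; that is not what one should aim for.

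The paper's argument circumvents this with two structural devices. First, in expanding $u(t_0)$ by Duhamel it goes back to time $0$, not to $t_0-T_0^{1/3}$:
\[
e^{i(t-t_0)\Delta}u(t_0) \;=\; e^{it\Delta}u_0 \;+\; i\int_0^{t_0}e^{i(t-s)\Delta}\bigl(|u|^{\frac{4}{d-1}}u\bigr)(s)\,ds,
\]
so that the free-evolution term is always the \emph{same fixed function} $e^{it\Delta}u_0$, rather than a moving target depending on $T_0$. Second, since $e^{it\Delta}u_0$ is globally finite in the relevant Strichartz norm, $\R$ can be partitioned into $J=J(\eps,E_0)$ intervals $I_j$ on each of which $\|e^{it\Delta}u_0\|$ is small; one then proves a bound on $u$ interval by interval and sums over $j$. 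The long Duhamel tail $\int_0^{t_0-T_0^{1/3}}$ is tamed not by spatial localization but by interpolating between the raw dispersive decay $|t-s|^{-2d/(d+1)}$ (yielding a negative power of $T_0$) and a crude Strichartz bound $\lesssim E_0^{1/2}$ obtained from the Duhamel identity. Intervals $I_j$ of length $\lesssim T_0$ are then handled by the trivial sub-polynomial bound $\|u\|_{L_t^q L_x^*\,(I)}^q\lesssim_{E_0}\langle I\rangle$. If you want to repair your write-up, replacing the step ``find a $t_*$ with globally small linear evolution'' by this interval decomposition is the essential change; the rest of your estimates can largely be kept.
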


We begin with the following lemma.

\begin{lemma}\label{L:SC} Let $u$ be as in Theorem~\ref{T:SC} and suppose that \eqref{sc} holds.  Then
\begin{equation}\label{sc2}
\begin{aligned}
\forall a\in \R\quad &\exists t_0\in(a,a+T_0)\qtq{such that} \\
&\biggl\| \int_0^{t_0} e^{i(t-s)\Delta}(|u|^{\frac{4}{d-1}}u)(s)\,ds\biggr\|_{L_t^{\frac{2(d+1)}{d}} L_x^{\frac{2d(d+1)}{d^2-2d-1}}([t_0,\infty)\times\R^d)}\lesssim \eps^{\frac{4}{d-1}}. 
\end{aligned}
\end{equation}
\end{lemma}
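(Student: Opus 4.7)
The plan is to apply the hypothesis \eqref{sc} to pick $t_0\in(a,a+T_0)$ satisfying $\|u\|_{L_{t,x}^{2(d+2)/(d-1)}([\tau,t_0]\times\R^d)}\lesssim\eps$, where $\tau:=t_0-T_0^{1/3}$, and then split the Duhamel integral as
\[
\int_0^{t_0} e^{i(t-s)\Delta}(|u|^{\frac{4}{d-1}}u)(s)\,ds=\int_0^{\tau}+\int_{\tau}^{t_0}.
\]
The first (``long-time'') piece benefits from pointwise dispersive decay over the time gap $|t-s|\ge T_0^{1/3}$ (valid when $t\ge t_0$ and $s\le\tau$); the second (``short-time'') piece benefits from the small scattering-norm control of $u$ on $[\tau,t_0]$.

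For the short-time piece, the target pair $(q_1,r_1):=(\tfrac{2(d+1)}{d},\tfrac{2d(d+1)}{d^2-2d-1})$ is half-derivative Sobolev-conjugate to the Strichartz-admissible pair $(q_1,p)$ with $p=\tfrac{2(d+1)}{d-1}$, in the sense that $L_x^{r_1}\hookleftarrow\dot{W}_x^{1/2,p}$. I would therefore replace the target by $\||\nabla|^{1/2}G\|_{L_t^{q_1}L_x^{p}}$, apply Strichartz with the dual-admissible source pair $(\tilde q,\tilde r)=(\tfrac{d+2}{d},\tfrac{2(d+2)}{d-2})$, and combine the fractional chain rule with H\"older on $[\tau,t_0]$ to bound
\[
\bigl\||\nabla|^{1/2}(|u|^{\frac{4}{d-1}}u)\bigr\|_{L_t^{\tilde q'}L_x^{\tilde r'}([\tau,t_0])}\lesssim \|u\|_{L_{t,x}^{2(d+2)/(d-1)}([\tau,t_0])}^{4/(d-1)}\cdot\||\nabla|^{1/2}u\|_{L_t^\infty L_x^2}\lesssim\eps^{4/(d-1)}E_0^{1/2},
\]
where the first factor comes from \eqref{sc} and the second from the uniform $H^1$ bound.

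For the long-time piece, I would combine the pointwise dispersive estimate $\|e^{i(t-s)\Delta}F(s)\|_{L_x^{r_1}}\lesssim|t-s|^{-\sigma}\|F(s)\|_{L_x^{r_1'}}$ (with $\sigma=d(\tfrac12-\tfrac1{r_1})>1$) with the uniform Sobolev bound $\|(|u|^{\frac{4}{d-1}}u)(s)\|_{L_x^{r_1'}}\lesssim\|u(s)\|_{H^1}^{(d+3)/(d-1)}\lesssim E_0^{(d+3)/(2(d-1))}$, which is valid since $r_1'(d+3)/(d-1)$ lies in the Sobolev range of Lebesgue exponents for $H^1(\R^d)$. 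Inserting this into the $L_t^{q_1}$ norm on $[t_0,\infty)$ and treating the resulting time convolution by Young's inequality (or Hardy--Littlewood--Sobolev) applied to the kernel $|t|^{-\sigma}\mathbf{1}_{|t|\ge T_0^{1/3}}$ yields a contribution of the form $T_0^{-\alpha(d)}$ for some $\alpha(d)>0$. Choosing $T_0=T_0(\eps,E_0)$ sufficiently large then forces this piece to also be $\lesssim\eps^{4/(d-1)}$.

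The main obstacle is the long-time piece: one must verify that the dispersive decay over the $T_0^{1/3}$ gap genuinely outpaces the (potentially large) $t_0$-dependent length of the integration window $[0,\tau]$, which comes down to a careful choice of auxiliary Lebesgue exponents in the time-convolution step. By contrast, the short-time estimate is essentially routine once the Sobolev/Strichartz reformulation is in place and the fractional chain rule is invoked---the latter being standard for $d\ge 3$, where the exponent $4/(d-1)\le 2$ poses no regularity difficulty.
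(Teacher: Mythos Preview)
Your overall decomposition into a short-time piece on $[\tau,t_0]$ and a long-time piece on $[0,\tau]$ matches the paper's, but both halves of your argument have gaps, and the long-time one is genuine.

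\textbf{Short-time piece.} Placing $|\nabla|^{1/2}u$ in $L_t^\infty L_x^2$ forces the dual Strichartz time exponent to be $\tilde q'=\tfrac{d+2}{2}$, i.e.\ $\tilde q=\tfrac{d+2}{d}<2$, which is \emph{not} admissible; your displayed pair $(\tilde q,\tilde r)=(\tfrac{d+2}{d},\tfrac{2(d+2)}{d-2})$ therefore cannot be used. The paper fixes this by first running a Strichartz bootstrap on $[\tau,t_0]$ (using the smallness in \eqref{sc}) to obtain $\||\nabla|^{1/2}u\|_{L_t^2 L_x^{2d/(d-2)}([\tau,t_0])}\lesssim 1$, and then placing the half-derivative factor in that endpoint norm. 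This is a routine repair.

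\textbf{Long-time piece.} Here your plan does not close. At the target space exponent $r_1=\tfrac{2d(d+1)}{d^2-2d-1}$ the dispersive rate is $\sigma=d(\tfrac12-\tfrac{1}{r_1})=\tfrac{3d+1}{2(d+1)}>1$, so the $s$-integral over $[0,\tau]$ converges and yields a bound of size $(t-\tau)^{1-\sigma}=(t-\tau)^{-(d-1)/(2(d+1))}$... but wait, actually $(t-\tau)^{1-\sigma}=(t-\tau)^{-(d-1)/(d+1)}$ is not correct either; the point is that $q_1(\sigma-1)=\tfrac{2(d+1)}{d}\cdot\tfrac{d-1}{2(d+1)}=\tfrac{d-1}{d}<1$, so the resulting function of $t$ is \emph{not} in $L_t^{q_1}([t_0,\infty))$. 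Young's inequality cannot save this: the only $\tau$-independent control on $\|(|u|^{4/(d-1)}u)(s)\|_{L_x^{r_1'}}$ is an $L_s^\infty$ bound, which forces the kernel into $L^a$ with $\tfrac1a=\tfrac1{q_1}+1>1$, outside the range of Young. Any $L_s^b$ bound with $b<\infty$ picks up a factor of $\tau^{1/b}$, and $\tau$ is unbounded since $a\in\R$ is arbitrary.

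The paper resolves this by \emph{interpolation}: it runs the dispersive estimate at the farther-from-$L^2$ exponent $r=\tfrac{2(d+1)}{d-3}$, where the decay $|t-s|^{-2d/(d+1)}$ is fast enough that both the $s$- and $t$-integrals converge, giving a $T_0^{-(d-1)/(6(d+1))}$ bound in $L_t^{2(d+1)/(d-1)}L_x^{2(d+1)/(d-3)}$. For the second interpolation endpoint it uses the Duhamel identity
\[
i\int_0^{\tau}e^{i(t-s)\Delta}(|u|^{\frac{4}{d-1}}u)(s)\,ds=e^{i(t-\tau)\Delta}u(\tau)-e^{it\Delta}u_0,
\]
which converts the long Duhamel integral into two free evolutions and yields a uniform $L_t^2 L_x^{2d/(d-2)}$ bound by Strichartz. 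Interpolating halfway between these two pairs lands exactly on $(q_1,r_1)$. This Duhamel-identity step is the missing idea in your proposal; without it there is no way to make the long-time bound independent of $t_0$.
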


\begin{proof}[Proof of Lemma~\ref{L:SC}] Let $a\in\R$ and choose $t_0\in(a,a+T_0)$ as in \eqref{sc}.  By Strichartz and standard bootstrap arguments, we can use the estimate in \eqref{sc} to deduce
\[
\| |\nabla|^{\frac12} u\|_{L_t^2 L_x^{\frac{2d}{d-2}}([t_0-T_0^{\frac13},t_0]\times\R^d)} \lesssim 1.
\]

Thus, by Sobolev embedding and Strichartz,
\begin{align*}
\biggl\|& \int_{t_0-T_0^{\frac13}}^{t_0} e^{i(t-s)\Delta}(|u|^{\frac{4}{d-1}}u)(s)\,ds \biggr\|_{L_t^{\frac{2(d+1)}{d}} L_x^{\frac{2d(d+1)}{d^2-2d-1}}([t_0,\infty)\times\R^d)} \\
&\lesssim \|u\|_{L_{t,x}^{\frac{2(d+2)}{d-1}}([t_0-T_0^{\frac13},t_0]\times\R^d)}^{\frac{4}{d-1}} \| |\nabla|^{\frac12} u\|_{L_t^2 L_x^{\frac{2d}{d-2}}([t_0-T_0^{\frac13},t_0]\times\R^d)} \lesssim \eps^{\frac{4}{d-1}}. 
\end{align*}

We treat the remaining part of the integral by interpolation.  First, by the dispersive estimate \eqref{dispersive} and $2\leq \frac{2(d+1)}{d-1}\leq\frac{2d}{d-2}$, 
\begin{align*}
\| e^{i(t-s)\Delta}(|u|^{\frac{4}{d-1}}u)(s)\|_{L^{\frac{2(d+1)}{d-3}}}& \lesssim |t-s|^{-\frac{2d}{d+1}} \|u(s)\|_{L^{\frac{2(d+1)}{d-1}}}^{\frac{4}{d-1}} \|u(s)\|_{L^2}\\
& \lesssim  |t-s|^{-\frac{2d}{d+1}}E_0^{\frac{d+3}{2(d-1)}}. 
\end{align*}
Thus
\[
\biggl\| \int_0^{t_0-T_0^{\frac13}} e^{i(t-s)\Delta}(|u|^{\frac{4}{d-1}} u)(s)\,ds\biggr\|_{L_t^{\frac{2(d+1)}{d-1}} L_x^{\frac{2(d+1)}{d-3}}([t_0,\infty)\times\R^d)} \lesssim E_0^{\frac{d+3}{2(d-1)}} T_0^{-\frac{d-1}{6(d+1)}}. 
\]
Next, noting that
\[
i\int_0^{t_0-T_0^{\frac13}}e^{i(t-s)\Delta}(|u|^{\frac{4}{d-1}}u)(s)\,ds = e^{i(t-t_0+T_0^{\frac13})\Delta}u(t_0-T_0^{\frac13}) - u(0), 
\]
we use Strichartz to estimate
\begin{align*}
\biggl\|& \int_0^{t_0-T_0^{\frac13}} e^{i(t-s)\Delta}(|u|^{\frac{4}{d-1}} u)(s)\,ds\biggr\|_{L_t^2 L_x^{\frac{2d}{d-2}}([t_0,\infty)\times\R^d)} \\
& \lesssim  \|u(t_0-T_0^{\frac13})\|_{L^2(\R^d)} + \|u(0)\|_{L^2(\R^d)} \lesssim E_0^{\frac12}. 
\end{align*}
Thus, by interpolation, we have
\begin{align*}
\biggl\| \int_0^{t_0}& e^{i(t-s)\Delta} (|u|^{\frac{4}{d-1}}u)(s)\,ds\biggr\|_{L_t^{\frac{2(d+1)}{d}} L_x^{\frac{2d(d+1)}{d^2-2d-1}}([t_0,\infty)\times\R^d)}\\
&\lesssim \eps^{\frac{4}{d-1}}+T_0^{-\frac{d-1}{12(d+1)}}E_0^{\frac{d+1}{2(d-1)}},
\end{align*}
and hence \eqref{sc2} holds for $T_0=T_0(E_0,\eps)$ sufficiently large. 
\end{proof}

We now turn to the proof of Theorem~\ref{T:SC}. 

\begin{proof}[Proof of Theorem~\ref{T:SC}] Let $u$ be as in the statement of Theorem~\ref{T:SC}.  We suppose that \eqref{sc} holds, and hence (by Lemma~\ref{L:SC}) \eqref{sc2} holds as well.

We begin by splitting $\R$ into $J=J(\eps,E_0)$ intervals $I_j$ such that
\begin{equation}\label{ij}
\| e^{it\Delta} u_0\|_{L_t^{\frac{2(d+1)}{d}} L_x^{\frac{2d(d+1)}{d^2-2d-1}}(I_j\times\R^d)} < \eps. 
\end{equation}
Our goal is to prove that for $T_0$ large enough, we have
\begin{equation}\label{goal}
\|u\|_{L_t^{\frac{2(d+1)}{d}} L_x^{\frac{2d(d+1)}{d^2-2d-1}}(I_j\times\R^d)}^{\frac{2(d+1)}{d}} \lesssim_{E_0} T_0
\end{equation} 
for each $j$.  Once we have established \eqref{goal}, summation over $I_j\subset\R$ yields the critical global space-time bound
\[
\|u\|_{L_t^{\frac{2(d+1)}{d}} L_x^{\frac{2d(d+1)}{d^2-2d-1}}(\R\times\R^d)}^{\frac{2(d+1)}{d}} \lesssim_{E_0} T_0,
\]
and a standard argument then yields scattering. 

We turn to \eqref{goal}.  In light of the general bound
\begin{equation}\label{gen-bd}
\|u\|_{L_t^{\frac{2(d+1)}{d}} L_x^{\frac{2d(d+1)}{d^2-2d-1}}(I\times\R^d)}^{\frac{2(d+1)}{d}}\lesssim_{E_0} \langle I\rangle
\end{equation}
(see e.g. \cite[Section~3]{Tao}), it suffices to consider $j$ such that $|I_j|> 2T_0$. 

Therefore we fix some $I_j=(a_j,b_j)$ with $|I_j|> 2T_0$.  We then choose $t_0\in(a_j,a_j+T_0)$ such that the estimate in \eqref{sc2} holds.  Writing
\[
e^{i(t-t_0)\Delta}u(t_0) = e^{it\Delta}u_0 + i\int_0^{t_0} e^{i(t-s)\Delta}(|u|^{\frac{4}{d-1}}u)(s)\,ds
\]
we can use \eqref{sc2} and \eqref{ij} to deduce
\[
\| e^{i(t-t_0)\Delta}u(t_0)\|_{L_t^{\frac{2(d+1)}{d}} L_x^{\frac{2d(d+1)}{d^2-2d-1}}((t_0,b_j)\times\R^d)} \lesssim \eps^{c}
\]
for some $0<c\leq 1$. Choosing $\eps$ small enough, a standard continuity argument then yields
\[
\|u\|_{L_t^{\frac{2(d+1)}{d}} L_x^{\frac{2d(d+1)}{d^2-2d-1}}((t_0,b_j)\times\R^d)} \lesssim \eps^c,
\]
and hence (using $t_0-a_j<T_0$ and \eqref{gen-bd} to handle the contribution of $(a_j,t_0)$) we deduce \eqref{goal} and complete the proof of Theorem~\ref{T:SC}. \end{proof} 

\section{Interaction Morawetz inequality}\label{S4}

In this section we prove an interaction Morawetz inequality for solutions to \eqref{nls} obeying \eqref{sub}.  The interaction Morawetz inequality was originally introduced in \cite{CKSTT} in the defocusing setting.  The estimate we prove is similar to those established in \cite{Dod1} and \cite{Dod2} in the focusing mass- and energy-critical settings.  In order to exhibit coercivity, we rely on the sharp Gagliardo--Nirenberg inequality and Galilean invariance. 

Let $\eps>0$ be a small constant and let $\chi$ be a radial decreasing function satisfying
\begin{equation}\label{chi}
\chi(x) =\begin{cases} 1 & |x|\leq 1-\eps, \\ 0 & |x|> 1\end{cases}. 
\end{equation}

Let $R\gg 1$ and define the radial function 
\[
\phi(x) = \tfrac{1}{\omega_d R^d} \int_{\R^d} \chi^2(\tfrac{x-s}{R})\chi^2(\tfrac{s}{R})\,ds, 
\]
where $\omega_d$ is the volume of the unit ball in $\R^d$.  Finally, introduce
\[
\psi(x)=\tfrac{1}{|x|}\int_0^{|x|} \phi(r)\,dr. 
\]
Note that 
\begin{equation}\label{1211}
|\psi(x)| \lesssim \min\{1,\tfrac{R}{|x|}\}\qtq{and} \partial_k \psi(x) = \frac{x_k}{|x|^2}[\phi(x)-\psi(x)]. 
\end{equation}

For a global solution $u$ to \eqref{nls}, we introduce the interaction Morawetz quantity
\[
M_R(t) = \iint_{\R^d\times\R^d} |u(t,y)|^2\psi(x-y)(x-y)\cdot 2\Im[\bar u\nabla u](t,x)\,dx\,dy. 
\]
Using \eqref{1211}, we see that
\begin{equation}\label{mrbd}
\sup_{t\in\R} |M_R(t)| \lesssim RE_0^2.
\end{equation}

Using this quantity, we will establish the following estimate.
\begin{theorem}[Interaction Morawetz estimate]\label{T:IM} Let $u_0\in H^1$ satisfy \eqref{sub}, and suppose further that
\[
M(u_0)=E(u_0)=E_0.
\]
Let $u:\R\times\R^d\to\C$ be the corresponding global solution to \eqref{nls} (cf. Lemma~\ref{L:C1}). 

Let $\eps>0$, $a\in\R$, $T_0\geq 1$, and $J\geq \eps^{-1}$.  There exists $\delta>0$ such that for $R_0=R_0(\delta,M(u),Q)$ sufficiently large,
\[
\tfrac{\delta}{J T_0}\!\int_a^{a+T_0}\int_{R_0}^{R_0e^J}\!\tfrac{1}{R^d} \iiint |\chi(\tfrac{y-s}{R})u(y)|^2 |\chi(\tfrac{x-s}{R})\nabla[e^{ix\xi} u(x)]|^2\,dx\,dy\,ds\tfrac{dR}{R}\,dt\lesssim \nu E_0^2,
\]
where $\chi$ is as in \eqref{chi}, 
\begin{equation}\label{nu}
\nu= \tfrac{R_0 e^J}{T_0J} + \eps,
\end{equation}
and
\[
\xi=\xi(t,s,R)= -\frac{\int \chi^2(\frac{x-s}{R})\Im[\bar u\nabla u](t,x) \,dx}{\int \chi^2(\frac{x-s}{R})|u(t,x)|^2\,dx}
\]
(unless the denominator is zero, in which case $\xi(t,s,R)=0$). 
\end{theorem}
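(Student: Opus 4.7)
The plan is to differentiate $M_R(t)$ in time, recognize $\phi(x-y)$ as a self-convolution of $\chi^2(\cdot/R)$ in order to expose the $s$-integration present on the left-hand side of the target inequality, introduce the pointwise Galilean boost $\xi(t,s,R)$ to convert the Morawetz kinetic density into the boosted gradient density $|\nabla[e^{ix\xi}u]|^2$, exploit Lemma~\ref{L:GN} together with the strict sub-threshold bound of Lemma~\ref{L:C1} to dominate the focusing nonlinearity, and finally average in time on $[a,a+T_0]$ and in scale on $[R_0,R_0e^J]$ against $\tfrac{dR}{R}$.

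Differentiating $M_R$ and using \eqref{nls} together with the continuity equation $\partial_t|u|^2+2\nabla\cdot\Im[\bar u\nabla u]=0$ produces a standard interaction-Morawetz identity of the schematic form
\[
\tfrac{d}{dt}M_R(t)=\mathcal{K}(t)-\mathcal{L}(t)-\mathcal{F}(t)+\mathcal{T}(t),
\]
where $\mathcal{K}$ is the kinetic bilinear term with density $|\nabla u|^2$, $\mathcal{L}$ is the quadratic local-momentum bilinear term arising from integration by parts in $y$ applied to the continuity equation, $\mathcal{F}$ is the focusing bilinear term containing $|u|^{\frac{2(d+1)}{d-1}}$, and $\mathcal{K}$, $\mathcal{L}$, $\mathcal{F}$ all carry the weight $\phi(x-y)$ inherited from the $\phi$-piece of $\partial_j\partial_k[\psi(x-y)(x-y)_k]$ via \eqref{1211}; the remainder $\mathcal{T}$ collects the $(\psi-\phi)$-weighted contributions, which decay like $R/|x-y|$ off-diagonal. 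Writing $\phi(x-y)=\omega_d^{-1}R^{-d}\int\chi^2(\tfrac{x-s}{R})\chi^2(\tfrac{y-s}{R})\,ds$ recasts $\mathcal{K}-\mathcal{L}-\mathcal{F}$ as triple integrals in $(x,y,s)$ carrying exactly the localizing factors in the target estimate.

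The defining formula for $\xi=\xi(t,s,R)$ is precisely the one producing the pointwise algebraic identity
\[
\int\chi^2(\tfrac{x-s}{R})|\nabla u(t,x)|^2\,dx-\tfrac{\big|\int\chi^2(\tfrac{x-s}{R})\Im[\bar u\nabla u](t,x)\,dx\big|^2}{\int\chi^2(\tfrac{x-s}{R})|u(t,x)|^2\,dx}=\int\chi^2(\tfrac{x-s}{R})|\nabla[e^{ix\xi}u](t,x)|^2\,dx,
\]
which matches exactly the combination of $\mathcal{K}$ and $\mathcal{L}$. Unfolding and combining these two contributions therefore yields the boosted triple integral $\iiint\chi^2(\tfrac{y-s}{R})|u(y)|^2\,\chi^2(\tfrac{x-s}{R})|\nabla[e^{ix\xi}u](x)|^2\,dx\,dy\,\tfrac{ds}{R^d}$ appearing in the target, modulo commutator errors supported on the annulus where $\nabla\chi(\tfrac{\cdot-s}{R})\neq 0$; that annulus has relative $s$-measure $O(\eps)$ and contributes $O(\eps E_0^2)$ in total. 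After the same unfolding, the focusing term $\mathcal{F}$ becomes a triple integral of $\chi^2(\tfrac{y-s}{R})|u(y)|^2\,\|v\|_{L^{\frac{2(d+1)}{d-1}}}^{\frac{2(d+1)}{d-1}}$ with $v(x)=\chi(\tfrac{x-s}{R})u(t,x)$. The main obstacle is to dominate this focusing contribution by a strictly smaller multiple of the boosted kinetic, uniformly in $(t,s,R)$: Lemma~\ref{L:GN} applied to $v$ with boost $\xi$, combined with $\|v\|_{L^2}\|\nabla v\|_{L^2}\le\|u(t)\|_{L^2}\|\nabla u(t)\|_{L^2}+O(R^{-1}E_0)$ and the strict sub-threshold bound of Lemma~\ref{L:C1}, forces the bracketed ratio $[\|v\|_{L^2}\|\nabla v\|_{L^2}/\|Q\|_{L^2}\|\nabla Q\|_{L^2}]^{2/(d-1)}$ to lie below $1-\delta$ uniformly in $(t,s)$, provided $R_0(\delta,M(u),Q)$ is chosen sufficiently large. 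This uniform variational gap $\delta>0$ is what makes the argument close.

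Combining these reductions yields, for each $R\ge R_0$ and $t$,
\[
\tfrac{\delta}{R^d}\iiint\chi^2(\tfrac{y-s}{R})|u(y)|^2\chi^2(\tfrac{x-s}{R})|\nabla[e^{ix\xi}u](x)|^2\,dx\,dy\,ds\lesssim\tfrac{d}{dt}M_R(t)+\eps E_0^2+|\mathcal{T}(t)|.
\]
Integrating in $t\in[a,a+T_0]$ controls the time boundary term by $2\sup_t|M_R(t)|\lesssim RE_0^2$ thanks to \eqref{mrbd}; after dividing by $T_0$ and averaging in $R$ against $\tfrac{dR}{R}$ on $[R_0,R_0e^J]$, this contributes $\tfrac{R_0e^J}{JT_0}E_0^2$. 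The annular commutators contribute $\eps E_0^2$, and $|\mathcal{T}|$, by virtue of its $R/|x-y|$ off-diagonal decay, gains an extra factor of $1/J$ from the logarithmic scale-average and is absorbed into the same two contributions. Collecting everything yields the stated bound with $\nu=\tfrac{R_0e^J}{JT_0}+\eps$ as in \eqref{nu}.
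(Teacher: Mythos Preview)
Your proposal is correct and follows essentially the same route as the paper: differentiate $M_R$, unfold $\phi$ as the self-convolution of $\chi^2(\cdot/R)$, use the Galilean boost $\xi(t,s,R)$ to recombine the $\phi$-weighted kinetic and momentum-cross terms into $\int\chi^2|u|^2\int\chi^2|\nabla u^\xi|^2$, invoke Lemma~\ref{L:GN} with Lemma~\ref{L:C1} for the coercive gap $\delta$, and average in $t$ and logarithmically in $R$. The only substantive difference is your treatment of the $(\psi-\phi)$-weighted kinetic and momentum pieces (the paper's \eqref{mor6}$+$\eqref{mor8}): you place them in $\mathcal T$ and control $|\mathcal T|$ via $|\psi-\phi|\lesssim\min\{|x-y|/R,R/|x-y|\}$ and the $1/J$ gain from the scale average, whereas the paper observes that these two terms combine, via the angular-derivative identity and Cauchy--Schwarz, into a nonnegative quantity that can simply be dropped. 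Both arguments close; the paper's sign observation is a shade cleaner, while your absolute-value bound is perfectly adequate here since $J\ge\eps^{-1}$. One small omission in your schematic: the bi-Laplacian term \eqref{mor2} produces, after two integrations by parts, a contribution weighted by $\nabla\phi$ and $\nabla\psi$ (the paper's \eqref{mor13}/\eqref{mor-error2}) rather than by $\psi-\phi$; it is $O((JR_0)^{-1}E_0^2)$ after averaging and is harmlessly absorbed, but it does not literally fit your description of $\mathcal T$.
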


\begin{proof}

In the following, repeated indices are summed and explicit dependence on $t$ is suppressed. We write $u_j=\partial_j u$, and so on.  

We rely on the identities
\begin{align*}
\partial_t 2\Im \bar u u_k &= \partial_k \tfrac{4}{d+1}|u|^{\frac{2(d+1)}{d-1}} + \partial_{jjk}|u|^2 - 4\Re\partial_j(\bar u_j u_k), \\
\partial_t |u|^2 &= -2\partial_k \Im(\bar u u_k),
\end{align*}
which follow from the equation \eqref{nls}. Then 
\begin{align}
\tfrac{dM_R}{dt} & =\tfrac{4}{d+1} \iint |u(y)|^2 \psi(x-y)(x-y)_k \partial_{k} |u(x)|^{\frac{2(d+1)}{d-1}} \,dx\,dy \label{mor1} \\ 
&\quad +\iint |u(y)|^2 \psi(x-y)(x-y)_k \partial_{jjk}|u(x)|^2 \,dx \,dy \label{mor2} \\
&\quad -4\iint \partial_{j}\Im(\bar u u_j)(y) \psi(x-y)(x-y)_k \Im(\bar u u_k)(x)\,dx\,dy \label{mor3} \\
&\quad -4\iint |u(y)|^2 \psi(x-y)(x-y)_k \Re\partial_{j}(\bar u_j u_k)(x)\,dx\,dy. \label{mor4}
\end{align}

We will first integrate by parts in \eqref{mor1}. Using \eqref{1211}, we have
\[
\partial_k[\psi(x) x_k] = d\phi(x) +(d-1)(\psi-\phi)(x). 
\]
We further introduce
\begin{equation}\label{phi1}
\phi_1(x,y) = \tfrac{1}{\omega_d R^d} \int \chi^2(\tfrac{y-s}{R}) \chi^{\frac{2(d+1)}{d-1}}(\tfrac{x-s}{R}).
\end{equation}
We can then write 
\begin{align}
\eqref{mor1} &= -\tfrac{4d}{(d+1)\omega_d R^d}\iiint \chi^2(\tfrac{y-s}{R})\chi^{\frac{2(d+1)}{d-1}}(\tfrac{x-s}{R})|u(y)|^2 |u(x)|^{\frac{2(d+1)}{d-1}}\,dx\,dy\,ds \label{mor10} \\
&\quad -\tfrac{4(d-1)}{d+1} \iint |u(y)|^2 |u(x)|^{\frac{2(d+1)}{d-1}}[(\psi-\phi)(x-y)]\,dx\,dy \label{mor11}\\
&\quad  -\tfrac{4d}{d+1}\iint |u(y)|^2 |u(x)|^{\frac{2(d+1)}{d-1}}[\phi(x-y)-\phi_1(x,y)]\,dx\,dy.\label{mor12}
\end{align}
We will later use \eqref{mor10} and the sharp Gagliardo--Nirenberg inequality (along with \eqref{75} below) to exhibit some coercivity, while \eqref{mor11} and \eqref{mor12} will be treated as error terms.

We next consider \eqref{mor2}.  We integrate by parts twice, which yields
\begin{equation}\label{mor13}
\eqref{mor2} = \iint |u(y)|^2 \nabla|u(x)|^2 \cdot \nabla[(d-1)\psi(x-y)+\phi(x-y)]\,dx\,dy.
\end{equation}
This term will be treated as an error term below. 

We turn our attention to \eqref{mor3} and \eqref{mor4}.  Let us denote
\[
P_{jk}(x-y)=\delta_{jk}-\tfrac{(x-y)_j(x-y)_k}{|x-y|^2}.
\]
Then integrating by parts,
\begin{align}
\eqref{mor3} & = 4\iint \Im(\bar u u_j)(y) \partial_j[\psi(x-y)(x-y)_k]\Im(\bar u u_k)(x)\,dx\,dy \nonumber \\
& = -4\iint \Im(\bar u u_j)(y)\Im(\bar u u_k)(x)\delta_{jk}\phi(x-y)\,dx\,dy\nonumber \\
& \quad -4\iint \Im(\bar u u_j)(y)\Im(\bar u u_k)(x)P_{jk}(x-y)[(\psi-\phi)(x-y)]\,dx\,dy\nonumber \\
& = -\tfrac{4}{\omega_d R^d} \iiint \chi^2(\tfrac{x-s}{R})\chi^2(\tfrac{y-s}{R})\Im[\bar u\nabla u](x)\cdot\Im[\bar u\nabla u](y)\,dx\,dy\,ds\label{mor5} \\
& \quad -4\iint \Im(\bar u u_j)(y)\Im(\bar u u_k)(x)P_{jk}(x-y)[(\psi-\phi)(x-y)]\,dx\,dy.\label{mor6}
\end{align}

Integrating by parts in \eqref{mor4}, we compute similarly
\begin{align}
\eqref{mor4} & = \tfrac{4}{\omega_d R^d}\iiint \chi^2(\tfrac{x-s}{R})\chi^2(\tfrac{y-s}{R})|u(y)|^2 |\nabla u(x)|^2 \,dx\,dy\,ds \label{mor7} \\
&\quad + 4\iint |u(y)|^2 \Re(\bar u_j u_k)(x) P_{jk}(x-y)[(\psi-\phi)(x-y)]\,dx\,dy. \label{mor8}
\end{align}

We treat the contribution of $\eqref{mor8}+\eqref{mor6}$ and $\eqref{mor7}+\eqref{mor5}$ separately. 

First, let $\slashed{\nabla}_y$ denote the angular derivative centered at $y$.  Then
\begin{align*}
\eqref{mor8}+\eqref{mor6} & = 4\iint |u(y)|^2 | |\slashed{\nabla}_y u(x)|^2[(\psi-\phi)(x-y)]\,dx\,dy \\
& \quad -4\iint \Im[\bar u\slashed{\nabla}_x u](y)\cdot\Im [\bar u \slashed{\nabla}_y u](x)[(\psi-\phi)(x-y)]\,dx\,dy,
\end{align*}
and hence by Cauchy--Schwarz and the fact that $\psi-\phi$ is a nonnegative radial function, we deduce
\begin{equation}\label{860}
\eqref{mor8}+\eqref{mor6}\geq 0. 
\end{equation}

We turn to $\eqref{mor7}+\eqref{mor5}$. For fixed $s\in\R^d$, consider the quantity  defined by
\[
\iint \chi^2(\tfrac{x-s}{R})\chi^2(\tfrac{y-s}{R})\bigl\{ |u(y)|^2|\nabla u(x)|^2\! - \Im[\bar u\nabla u](x)\cdot\Im[\bar u\nabla u](y)\bigr\}\,dx\,dy. 
\]
We claim this quantity is Galilean invariant, that is, invariant under the transformation 
\[
u(t,x)\mapsto u^{\xi}(t,x):=e^{ix\xi}u(t,x)
\]
for any $\xi=\xi(t,s,R)$.  Indeed, one has
\begin{align*}
|&u^\xi(y)|^2|\nabla u^\xi(x)|^2 - \Im[\bar u^\xi\nabla u^\xi](x)\cdot\Im[\bar u^\xi\nabla u^\xi](y) \\
& =|u(y)|^2 |\nabla u(x)|^2 - \Im[\bar u\nabla u](x)\cdot\Im[\bar u\nabla u](y) \\
&\quad  +\xi \cdot |u(y)|^2 \Im [\bar u\nabla u](x) - \xi\cdot|u(x)|^2\Im[\bar u\nabla u](y) \end{align*}
and hence the claim follows by symmetry of $\chi^2$ and a change of variables.

We now define $\xi=\xi(t,s,R)$ so that
\[
\int \chi^2(\tfrac{x-s}{R})\Im[ \bar u^\xi\nabla u^\xi](x)\,dx = 0. 
\]
In particular, we can achieve this by choosing
\[
\xi(t,s,R)= -\frac{\int \chi^2(\frac{x-s}{R})\Im[\bar u\nabla u](x) \,dx}{\int \chi^2(\frac{x-s}{R})|u(x)|^2\,dx} 
\]
provided the denominator is nonzero (otherwise $\xi\equiv 0$ suffices).

For this choice of $\xi$, we have
\begin{equation}\label{75}
\eqref{mor7}+\eqref{mor5} = \tfrac{4}{\omega_dR^d} \iiint \chi^2(\tfrac{x-s}{R})\chi^2(\tfrac{y-s}{R}) |u(y)|^2 |\nabla u^\xi(x)|^2\,dx\,dy\,ds.
\end{equation}
We will combine this term with \eqref{mor10} and use the sharp Gagliardo--Nirenberg inequality to exhibit coercivity below. 

We now collect \eqref{mor10}, \eqref{mor11}, \eqref{mor12}, \eqref{mor13}, \eqref{860}, and \eqref{75} to deduce 
\begin{align}
&\tfrac{c}{R^d} \! \iiint |\chi(\tfrac{y-s}{R}) u(y)|^2 \bigl\{ |\chi(\tfrac{x-s}{R})\nabla u^\xi(x)|^2\!-\!\tfrac{d}{d+1}|\chi(\tfrac{x-s}{R}) u(x)|^{\frac{2(d+1)}{d-1}}\bigr\}\,dx\,dy\,ds \label{mor-lhs}\\ & \leq \frac{dM_R}{dt} \label{morq} \\
& \quad + \iint |u(y)|^2 |u(x)|^{\frac{2(d+1)}{d-1}} \bigl|[\psi-\phi](x-y)+ [\phi(x-y)-\phi_1(x,y)]\bigr|\,dx\,dy \label{mor-error1}\\
& \quad + \iint |u(y)|^2 |u(x)| |\nabla u(x)| \bigl| \nabla[\psi(x-y)+\phi(x-y)]\bigr| \,dx\,dy  \label{mor-error2}
\end{align}
for some $c>0$.  

We will average this inequality over $t\in [a,a+T_0]$ and logarithmically over $R\in[R_0,R_0 e^J]$.

We start with \eqref{morq}.  Recalling \eqref{mrbd}, we have by the fundamental theorem of calculus
\begin{equation}\label{averaged-1}
\biggl| \tfrac{1}{ JT_0} \int_a^{a+T_0} \int_{R_0}^{R_0e^J} \tfrac{dM_R}{dt} \tfrac{dR}{R} dt\biggr| \lesssim \tfrac{1}{T_0} \tfrac{R_0 e^J}{J} E_0^2. 
\end{equation}

We turn to \eqref{mor-error1}.  Recalling \eqref{phi1} and the definition of $\chi$, we have
\[
\bigl| \phi(x-y)-\phi_1(x,y)\bigr| \lesssim \eps.
\]
Similarly, by construction,
\[
|\psi(x)-\phi(x)| \lesssim \min\{\tfrac{|x|}{R},\tfrac{R}{|x|}\}. 
\]
Noting that
\[
\tfrac{1}{J}\int_{R_0}^{e^JR_0} \min\bigl\{\tfrac{|x-y|}{R},\tfrac{R}{|x-y|}\bigr\}\tfrac{dR}{R} \lesssim \tfrac{1}{J}, 
\]
we deduce
\begin{equation}\label{averaged-2}
\tfrac{1}{T_0}\int_I \tfrac{1}{J}\int_{R_0}^{e^JR_0} \eqref{mor-error1}\tfrac{dR}{R}\,dt \lesssim (\eps+\tfrac{1}{J})E_0^2. 
\end{equation}

We turn to \eqref{mor-error2}.  For this term we note that
\[
|\nabla\phi|\lesssim \tfrac{1}{R}\qtq{and}|\nabla\phi| = \bigl|\tfrac{x}{|x|^2}(\phi-\psi)\bigr| \lesssim \min\{\tfrac{1}{R},\tfrac{R}{|x|^2}\}.
\]
Thus, noting that
\[
\tfrac{1}{J}\int_{R_0}^{e^JR_0}\tfrac{dR}{R^2} \lesssim \tfrac{1}{JR_0}E_0^2,
\]
we deduce
\begin{equation}\label{averaged-3}
\tfrac{1}{T_0}\int_I \tfrac{1}{J}\int_{R_0}^{e^JR_0}\eqref{mor-error2}\tfrac{dR}{R}\,dt \lesssim \tfrac{1}{JR_0}E_0^2. 
\end{equation}

Finally, we consider \eqref{mor-lhs}.  We wish to establish a {lower} bound for this term by using the sharp Gagliardo--Nirenberg inequality.

To this end, first note that by \eqref{sub} and Lemma~\ref{L:C1}, there exists $\delta>0$ so that 
\begin{equation}\label{below1}
\sup_{t\in\R}\bigl\{\|u(t)\|_{L^2} \|\nabla u(t)\|_{L^2}\bigr\} < (1-3\delta)^{\frac{d-1}{2}} \|Q\|_{L^2}\|\nabla Q\|_{L^2}. 
\end{equation}
We claim that for $R=R(\delta,M(u),Q)$ large enough, we in fact have
\begin{equation}\label{below2}
\sup_{t\in\R}\bigl\{\|\chi(\tfrac{\cdot-s}{R})u\|_{L^2}\|\nabla[\chi(\tfrac{\cdot-s}{R})u]\|_{L^2}\bigr\}<(1-2\delta)^{\frac{d-1}{2}}\|Q\|_{L^2}\|\nabla Q\|_{L^2}. 
\end{equation}
To see this, first note that multiplication by $\chi$ only decreases the $L^2$-norm, so that it suffices to consider the $\dot H^1$-norm.  For this, we use the identity
\begin{equation}\label{IBP}
\int |\nabla(\chi u)|^2 \,dx = \int \chi^2|\nabla u|^2 - \chi\Delta \chi |u|^2\,dx,
\end{equation}
whence
\[
\|\nabla[\chi(\tfrac{\cdot-s}{R})u]\|_{L^2}^2 \leq \|\nabla u\|_{L^2}^2 + \mathcal{O}(\tfrac{1}{R^2}M(u)).
\]
In particular, choosing $R=R(\delta,M(u),Q)$ large enough, \eqref{below2} follows from \eqref{below1}.  

In light of \eqref{below1}, we may apply the sharp Gagliardo--Nirenberg inequality in the form of Lemma~\ref{L:GN} (and then use \eqref{IBP} once more, possibly choosing $R=R(M(u),\delta)$ even larger) to deduce 
\[
\tfrac{d}{d+1}\| \chi(\tfrac{\cdot-s}{R}) u\|_{L^{\frac{2(d+1)}{d-1}}}^{\frac{2(d+1)}{d-1}} \leq (1-2\delta)\| \nabla[\chi(\tfrac{\cdot-s}{R}) u^\xi]\|_{L^2}^2\leq (1-\delta)\|\chi(\tfrac{\cdot-s}{R})\nabla u^\xi\|_{L^2}^2
\]
uniformly for $t\in\R$. Consequently,
\begin{equation}\label{averaged-4}
\begin{aligned}
\tfrac{1}{T_0}&\int_I \tfrac{1}{J}\int_{R_0}^{R_0e^J}\eqref{mor-lhs}\tfrac{dR}{R}\,dt \\
& \geq\tfrac{\delta}{T_0}\int_I\tfrac{1}{J}\int_{R_0}^{R_0e^J}\tfrac{1}{R^d} \iiint |\chi(\tfrac{y-s}{R})u(y)|^2 |\chi(\tfrac{x-s}{R})\nabla u^\xi(x)|^2\,dx\,dy\,ds\tfrac{dR}{R}\,dt.
\end{aligned}
\end{equation}

Collecting \eqref{averaged-1}, \eqref{averaged-2}, \eqref{averaged-3}, and \eqref{averaged-4}, we find
\begin{equation}\label{final-mor}
\begin{aligned}
\tfrac{\delta}{T_0}&\int_I\tfrac{1}{J}\int_{R_0}^{R_0e^J}\tfrac{1}{R^d} \iiint |\chi(\tfrac{y-s}{R})u(y)|^2 |\chi(\tfrac{x-s}{R})\nabla u^\xi(x)|^2\,dx\,dy\,ds\tfrac{dR}{R}\,dt\\
& \lesssim E_0^2( \tfrac{R_0 e^J}{T_0J} + \eps+\tfrac{1}{J} + \tfrac{1}{R_0J}),
\end{aligned}
\end{equation}
which completes the proof of Theorem~\ref{T:IM}. \end{proof}

\section{Proof of the main result}\label{S5}

In this section, we use the interaction Morawetz inequality and the scattering criterion in Theorem~\ref{T:SC} to prove Theorem~\ref{T}.  

\begin{proof}[Proof of Theorem~\ref{T}] Let $u_0$ be as Theorem~\ref{T} and let $u$ be the corresponding solution to \eqref{nls}.  By Lemma~\ref{L:C1}, $u$ is global and obeys \eqref{h1bd}. It remains to establish scattering.  In particular, it suffices to consider scattering forward in time, for which it is enough to verify the scattering criterion \eqref{sc} appearing in Theorem~\ref{T:SC}.

Using the rescaling \eqref{scale}, we may assume that
\[
M(u_0)=E(u_0)=E_0.
\] 
In order to establish \eqref{sc}, we first fix $a\in\R$ and let $0<\eps\ll 1$ and $T_0\gg1$ (to be determined more precisely below).

By Theorem~\ref{T:IM}, there exists $t_1\in(a, a+\tfrac12T_0)$ and $R\in[R_0,e^JR_0]$ so that
\[
\tfrac{\delta}{R^d}\iiint |\chi(\tfrac{y-s}{R})u(t_1,y)|^2 |\chi(\tfrac{x-s}{R})\nabla u^{\xi_1}(t_1,x)|^2\,dx\,dy\,ds \lesssim \nu E_0^2,
\]
where $\xi_1=\xi(t_1,s,R)$ and $\nu$ is as in \eqref{nu}. We make the change of variables $s=\tfrac{R}{4}(z+\theta)$ where $z\in\Z^d$ and $\theta\in[0,1]^d$.  We deduce that there exists $\theta_1\in[0,1]^d$ so that
\begin{equation}\label{im-conclusion} 
\delta\sum_{z\in\Z^d} \iint |\chi(\tfrac{y-s_1}{R})u(t_1,y)|^2 |\chi(\tfrac{x-s_1}{R})|\nabla u^{\xi_1}(t_1,x)|^2\,dx\,dy\lesssim \nu E_0^2,
\end{equation}
where now
\[
s_1=s_1(z)=\tfrac{R}{4}z+\theta_1 \qtq{and}\xi_1=\xi(t_1,s_1(z),R). 
\]
Without loss of generality, we may assume $\theta_1=0$, so that 
\[
s_1=\tfrac{R}{4}z\qtq{and}\xi_1=\xi(t_1,\tfrac{R}{4}z,R). 
\]

Our goal will be to estimate
\[
\| u_L(t) \|_{L_{t,x}^{\frac{2(d+2)}{d-1}}([t_1,t_1+T_0^{\frac13}]\times\R^d)},\qtq{where} u_L(t):=e^{i(t-t_1)\Delta}u(t_1).
\]
To this end, for each $z\in\Z^d$, we introduce $v(z):\R\times\R^d\to\C$ defined by
\begin{equation}\label{vz}
v(z;t,x) := \chi(\tfrac{x-\frac{R}{4}z}{R})u_L(t).
\end{equation}
By the support properties of $\chi$, we have
\begin{equation}\label{vz2}
\| u_L \|_{L_{t,x}^{\frac{2(d+2)}{d-1}}([t_1,t_1+T_0^{\frac13}]\times\R^d)}^{\frac{2(d+2)}{d-1}}\lesssim\sum_{z\in\Z^d} \|v(z)\|_{L_{t,x}^{\frac{2(d+2)}{d-1}}([t_1,t_1+T_0^{\frac13}]\times\R^d)}^{\frac{2(d+2)}{d-1}},
\end{equation}
and hence we are faced with estimating each $v(z)$.  To this end, note that each $v(z)$ solves an equation, namely
\[
\begin{cases}
(i\partial_t+\Delta)v(z) =  2\nabla u_L \cdot\nabla[\chi(\tfrac{x-\frac{R}{4}z}{R})] + u_L\Delta[\chi(\tfrac{x-\frac{R}{4}z}{R})], \\
v(z;t_1,x) = \chi(\tfrac{x-\frac{R}{4}z}{R})u(t_1,x).
\end{cases}
\]

To simplify notation, let us translate $t_1$ to $0$, take space-time norms over $[0,T_0^{\frac13}]\times\R^d$, and denote $q=\frac{2(d+2)}{d-1}$. 

We then have the following Duhamel formula for $v(z)$: 
\begin{align}
v(z;t) & = e^{it\Delta}\bigl[\chi(\tfrac{\cdot-\frac{R}{4}z}{R})u(0)\bigr] \label{v-duhamel1}\\
& \quad -i\int_{0}^t e^{i(t-s)\Delta}\bigl\{ 2\nabla u_L(s)\cdot \nabla[\chi(\tfrac{\cdot-\frac{R}{4}z}{R})] + u_L(s)\Delta[\chi(\tfrac{\cdot-\frac{R}{4}z}{R})]\bigr\}\,ds. \label{v-duhamel3}
\end{align}

To estimate \eqref{v-duhamel1}, we will exploit Galilean invariance, specifically through the identity
\[
e^{it\Delta}[e^{ix\xi}f(x)] = e^{-it|\xi|^2 + ix\xi}[e^{it\Delta}f](x-2t\xi)\qtq{for any}\xi\in\R^d. 
\]
Thus by a change of variables, Sobolev embedding, Strichartz, and interpolation, we deduce
\begin{align}
\| e^{it\Delta}[\chi(\tfrac{\cdot-\frac{R}{4}z}{R})u(0)]\|_{L_{t,x}^q}& \lesssim \|e^{it\Delta} |\nabla|^{\frac12}[e^{ix\xi_0}\chi(\tfrac{\cdot-\frac{R}{4}z}{R})u(0)]\|_{L_t^q L_x^{\frac{2dq}{dq-4}}} \nonumber \\
&\lesssim \bigl\| \chi(\tfrac{\cdot-\frac{R}{4}z}{R})u(0)\bigr\|_{L^2}^{\frac12} \bigl\|\nabla\bigl[\chi(\tfrac{x-\frac{R}{4}z}{R})e^{ix\xi_0}u(0)\bigr]\bigr\|_{L^2}^{\frac12}, \nonumber \\
& \lesssim \bigl\| \chi(\tfrac{\cdot-\frac{R}{4}z}{R})u(0)\bigr\|_{L^2}^{\frac12} \bigl\|\chi(\tfrac{x-\frac{R}{4}z}{R})\nabla[e^{ix\xi_0}u(0)]\bigr\|_{L^2}^{\frac12} \label{1218-bd1} \\
& \quad + \bigl\| \chi(\tfrac{\cdot-\frac{R}{4}z}{R})u(0)\bigr\|_{L^2}^{\frac12} \bigl\|\nabla[\chi(\tfrac{x-\frac{R}{4}z}{R})]u(0)\bigr\|_{L^2}^{\frac12}. \label{1218-bd2}
\end{align}
For \eqref{1218-bd1}, we use H\"older and \eqref{im-conclusion} to estimate
\begin{align}
\| \eqref{1218-bd1}\|_{\ell_z^q} & \lesssim \| \chi(\tfrac{\cdot-\frac{R}{4}z}{R})u(0) \|_{\ell_z^q L_x^2}^{\frac12} \|\chi(\tfrac{x-\frac{R}{4}z}{R})\nabla[e^{ix\xi_0}u(0)]\|_{\ell_z^q L_x^2}^{\frac12} \nonumber \\
& \lesssim  \| \chi(\tfrac{\cdot-\frac{R}{4}z}{R})u(0) \|_{\ell_z^2 L_x^2}^{\frac12} \|\chi(\tfrac{x-\frac{R}{4}z}{R})\nabla[e^{ix\xi_0}u(0)]\|_{\ell_z^2 L_x^2}^{\frac12}  \lesssim \delta^{-\frac14}\nu^{\frac14}E_0^{\frac12}. \label{1218-3}
\end{align}
For \eqref{1218-bd2}, we instead have
\begin{align}
\| \eqref{1218-bd2}\|_{\ell_z^q} & \lesssim \tfrac{1}{\eps^{\frac12} R^{\frac12}} \|\chi(\tfrac{\cdot-\frac{R}{4}z}{R})u(0)\|_{\ell_z^2 L_x^2}^{\frac12} \| (\nabla \chi)(\tfrac{\cdot-\frac{R}{4}z}{R})u(0)\|_{\ell_z^2 L_x^2}^{\frac12} \lesssim \frac{E_0^{\frac12}}{\eps^{\frac12} R^{\frac12}}. \label{1218-4}
\end{align}

We estimate \eqref{v-duhamel3} using the local smoothing estimate \eqref{smoothing2}:
\begin{align*}
\| \eqref{v-duhamel3}\|_{\ell_z^q L_{t,x}^q} & \lesssim R^{\frac12}\| \nabla u_L\cdot \nabla[\chi(\tfrac{\cdot-\frac{R}{4}z}{R})]\|_{\ell_z^q L_{t,x}^2} + R^{\frac12}\| u_L\Delta[\chi(\tfrac{\cdot-\frac{R}{4}z}{R})]\|_{\ell_z^q L_{t,x}^2} \\
& \lesssim R^{\frac12}\| \nabla u_L\cdot \nabla[\chi(\tfrac{\cdot-\frac{R}{4}z}{R})]\|_{\ell_z^2 L_{t,x}^2} + R^{\frac12}\| u_L\Delta[\chi(\tfrac{\cdot-\frac{R}{4}z}{R})]\|_{\ell_z^2 L_{t,x}^2}.
\end{align*}
Then by the support properties of $\chi$, we can bound
\[
R\int_0^{T_0^{\frac13}} \sum_{z\in\Z^d} \bigl\|\nabla u_L(t)\cdot\nabla[\chi(\tfrac{\cdot-\frac{R}{4}z}{R})]\bigr\|_{L_x^2}^2\,dt  \lesssim \tfrac{1}{\eps^2 R} \int_0^{T_0^{\frac13}} \|\nabla_L u(t)\|_{L_x^2}^2 \,dt \lesssim \tfrac{T_0^{\frac13}}{\eps^2 R}E_0.
\]
Estimating similarly for the term containing the Laplacian, we deduce
\begin{equation}\label{1218-1}
\|\eqref{v-duhamel3}\|_{\ell_z^q L_{t,x}^q} \lesssim \biggl[\frac{T_0^{\frac16}}{\eps R^{\frac12}} + \frac{T_0^{\frac16}}{\eps^2 R^{\frac32}}\biggr]E_0^{\frac12}. 
\end{equation}

Recalling \eqref{nu} and \eqref{vz2} and collecting \eqref{1218-3}, \eqref{1218-4}, and \eqref{1218-1}, we deduce
\[
\|u_L\|_{L_{t,x}^{\frac{2(d+2)}{d-1}}([t_1,t_1+T_0^{\frac13}]\times\R^d)}  \lesssim \biggl[\biggl(\frac{R_0 e^J}{T_0 J \delta}\biggr)^{\frac14}+\frac{1}{\eps^{\frac12} R_0^{\frac12}}+\frac{T_0^{\frac16}}{\eps R_0^{\frac12}}+\frac{T_0^{\frac16}}{\eps^2 R_0^{\frac32}}\biggr] E_0^{\frac12}.
\]
Choosing $R_0$ and $T_0$ large enough (e.g. taking $T_0\sim R_0^{\frac37}[\tfrac{e^J}{J}]^{\frac3{14}}\delta^{-\frac3{14}}\eps^{\frac{6}{7}}$) and recalling the definition of $u_L$ yields
\[
\|e^{i(t-t_1)\Delta}u(t_1)\|_{L_{t,x}^{\frac{2(d+2)}{d-1}}([t_1,t_1+T_0^{\frac13}]\times\R^d)} \lesssim \eps E_0^{\frac12}.
\] 
Thus, for $\eps=\eps(E_0)$ small enough, a standard continuity argument yields
\[
\|u\|_{L_{t,x}^{\frac{2(d+2)}{d-1}}([t_1,t_1+T_0^{\frac13}]\times\R^d)} \lesssim \eps^{\frac12}.
\]
As $\eps>0$ was arbitrary, this implies \eqref{sc} with $t_0=t_1+T_0^{\frac13}\in(a,a+T_0).$  Appealing to Theorem~\ref{T:SC}, we complete the proof of Theorem~\ref{T}.\end{proof}

\end{document}